\newcommand{\R}{\mathbb{R}}
\newcommand{\N}{\mathbb{N}}
\newcommand{\sys}{\mathrm{sys}}
\theoremstyle{plain}
\newtheorem{theorem}{Theorem}[section]
\newtheorem{lemma}[theorem]{Lemma}
\theoremstyle{definition}
\newtheorem{definition}[theorem]{Definition}
\newtheorem{question}[theorem]{Question}
\theoremstyle{remark}
\newtheorem{note}[theorem]{Note}
\numberwithin{equation}{section}
\numberwithin{figure}{section}
\title{Bolza-like surfaces in the Thurston set}
\author{Achintya Dey}
\address{
Department of Mathematics and Statistics\\ 
Indian Institute of Technology  \\ 
Kanpur, Uttar Pradesh-208016\\
India}
\email{achintd@iitk.ac.in}
\author{Bhola Nath Saha}
\address{
Department of Mathematics and Statistics\\ 
Indian Institute of Technology  \\ 
Kanpur, Uttar Pradesh-208016\\
India}
\email{sahabholanath497@gmail.com}
\author{Bidyut Sanki}
\address{
Department of Mathematics and Statistics\\ 
Indian Institute of Technology  \\ 
Kanpur, Uttar Pradesh-208016\\
India}
\email{bidyut@iitk.ac.in}
\date{\today}
\begin{document}

\subjclass[2020]{Primary 57M50}

\keywords{Hyperbolic surface, Thurston spine, Bolza surface}

\begin{abstract}
   A surface in the Teichm\"uller space, where the systole function admits its maximum, is called a maximal surface. For genus two, a unique maximal surface exists, which is called the Bolza surface, whose systolic geodesics give a triangle decomposition of the surface. We define a surface as Bolza-like if its systolic geodesics decompose the surface into $(p, q, r)$-triangles for some integers $p,q,r$. In this article, we construct a one-parameter family of hyperbolic surfaces within the Thurston set $\chi_g$, which, in turn, yields Bolza-like surfaces for infinitely many genera $g\geq 9$. Next, we see an intriguing application of Bolza-like surfaces. In particular, we construct global maximal surfaces using these Bolza-like surfaces. Furthermore, we study a symmetric property satisfied by the systolic geodesics of our Bolza-like surfaces. We show that any simple closed geodesic intersects the systolic geodesics at an even number of points.
\end{abstract}
\maketitle
\tikzset{->-/.style={decoration={
  markings,
  mark=at position #1 with {\arrow{>}}},postaction={decorate}}}
  \tikzset{-<-/.style={decoration={
  markings,
  mark=at position #1 with {\arrow{<}}},postaction={decorate}}}
\section{Introduction}
Let $\mathcal{T}_g$ denote the Teichm\"uller space of a closed orientable surface $S_g$ of genus $g$. The systole function $\sys : \mathcal{T}_g \to \R, g \geq 2$, is defined on the Teichm\"uller space and for a hyperbolic surface $X\in \mathcal{T}_g$, it outputs the length of a shortest non-trivial geodesic in $X$. A closed geodesic in the surface $X$ having length $\sys(X)$ is called a \emph{systolic geodesic}. Hyperbolic surfaces with filling set of systolic geodesics are of great interest due to Thurston's well-known spine problem. A spine in the Teichm\"uller space $\mathcal{T}_g$ is a mapping class group equivariant deformation retract in $\mathcal{T}_g$ of minimal dimension. Thurston claimed that the set $\chi_g$ consisting of those hyperbolic surfaces with a filling set of systolic geodesics, called as Thurston set, is a spine in $\mathcal{T}_g$. He sketched a proof in \cite{thurstonspine} but unfortunately, the proof is incomplete. Even finding elements of the set $\chi_g$ is a challenging problem. Anderson-Parlier-Pettet (see Theorem 3 \cite{anderson2011small}) have shown that if the systolic geodesics fill a closed hyperbolic surface of genus $g$ then it contains at least $\pi \sqrt{g(g-1)}/\log{(4g-2)}$ systolic geodesics. The authors further constructed hyperbolic surfaces of genus $g$ with $2g+2$ systolic geodesics and showed that any $2g$ of these geodesics are sufficient to fill the surface. In \cite{anderson6146relative}, the authors constructed a sequence of surfaces $S_{g_k}$ in the Thurston set $\chi_{g_k}$ with large Bers constant, where $g_k$ is large enough. Later, B. Sanki \cite{MR3877282} has constructed hyperbolic surfaces with $2g$ systolic geodesics and he also introduces a systolic length increasing deformation in the moduli space. More recently, Bourque~\cite{bourque2022dimension}, Mathieu~\cite{mathieu2023estimating}) have studied the dimension of the set $\chi_g$.

Thurston's idea to deform $\mathcal{T}_g$ into a small neighborhood of $\chi_g$ is as follows (for more details, see \cite{thurstonspine}, Remark 4.4 of \cite{Lizhen}): Take a hyperbolic surface which is not an element of the set $\chi_g$. Consider a simple closed geodesic that does not intersect any of the systolic geodesics. Now, cut the surface along this curve and attach two funnels along these boundaries. Take a bi-infinite geodesic arc that intersects all the systolic geodesics and insert a thin strip along this arc. If we delete the funnels and reglue the boundaries, then we get a hyperbolic surface of the same genus and with a larger systolic length. Thus, by deforming a hyperbolic surface that is not a member of the Thurston set, we obtain a new surface with a larger systole. Motivated by Thurston's idea, in this paper, we study the following natural question.
\begin{question}
    Does there exist a systolic length increasing deformation on a surface $X\in\chi_g$?
\end{question}

 In this article, we introduce a deformation on hyperbolic surfaces in $\chi_g$ and show that this deformation is also systolic length increasing.

Akrout~\cite{Akrout} has shown that the systole function is a topological Morse function on the Teichm\"uller space. By collar lemma (see Theorem 4.1.1~\cite{Buser}), the function $\sys$ does not have a minima. The global maxima for the function $\sys$ are unknown except for genus $g=2$. Jenny \cite{Jenni} has shown that for a surface $X\in \mathcal{T}_2, $ $ \cosh(\sys(X)/2)\leq \sqrt{2}+1$ and equality occurs for a unique surface (up to isometry), namely the so-called Bolza surface. The Bolza surface has 12 systolic geodesics and they decompose the surface into triangles with each triangle of type $(4,4,4)$ (see Section 5, \cite{Schmutz}). Note that a hyperbolic triangle is called of type $(p,q,r)$, for some $p,q,r\in \N$ if its interior angles are $\pi/p, \pi/q$ and $\pi/r$. By Gauss-Bonnet theorem, we have $\frac{1}{p}+ \frac{1}{q}+ \frac{1}{r}<1$. Motivating from this, we define the following.
\begin{definition}
    A closed hyperbolic surface is called Bolza-like, if the systolic geodesics decompose it into $(p,q,r)$ triangles for some positive integers $p,q,r$.
\end{definition}
In this article, we show that our systolic length-increasing deformation on a surface in $\chi_g$ yields Bolza-like surfaces, for infinitely many genera $g\geq 9$. In particular, we prove the following theorem, which is the main result of the paper.


\begin{restatable}{theorem}{mainthm}\label{thm:1.3}
For every integer $g=kn+1$, where $k,n\in \N, k\geq 2, n\geq 4$, there is a continuous function $\mathcal{D}_g:\left[ 0, \frac{\pi}{12}\right] \to \mathcal{T}_g$ satisfying the following.
    \begin{enumerate}
        \item $\mathcal{D}_g(t)\in \chi_g$, for all $t\in \left[ 0, \frac{\pi}{12}\right]$, where $\chi_g$ is the Thurston set.
        \item $\mathrm{sys}(\mathcal{D}_g(t_1))<\mathrm{sys}(\mathcal{D}_g(t_2))$, for all $0\leq t_1 < t_2 \leq \frac{\pi}{12}.$
        \item The surface $\mathcal{D}_g\left(\frac{\pi}{12}\right)$ has $(6g-6)$ many systolic geodesics that give a triangle decomposition of the surface. In particular, $\mathcal{D}_g\left(\frac{\pi}{12}\right)$ is Bolza-like.
    \end{enumerate}
\end{restatable}

The proof of Theorem~\ref{thm:1.3} consists of a rigorous construction of a one-parameter family of hyperbolic surfaces, together with an explicit analysis of the systolic geodesics on these surfaces.

Furthermore, we present an application (see Section \ref{Application}) of the Bolza-like surfaces constructed in Theorem~\ref{thm:1.3}. It is a well-known and difficult problem to find the critical points of the systole function as very few examples are known \cite{Schmutz, SchmutzGlobalMaximal, Ursula, an2025small}. In this article, using the Bolza-like surfaces, we construct examples of global maxima of the systole function for infinitely many genera.



Finally, we show that the distribution of the systolic geodesics in the Bolza-like surfaces is symmetric in the sense that any simple closed geodesic intersects the systolic geodesics at an even number of points. In particular, we prove the following theorem.

\begin{restatable}{theorem}{mainthmm}\label{thm:distribution}
    Any simple closed geodesic on the Bolza-like surfaces $\mathcal{D}_g\left(\frac{\pi}{12} \right)$, as obtained in Theorem \ref{thm:1.3}, intersects the collection $\mathrm{Sys}\left(\mathcal{D}_g\left(\frac{\pi}{12} \right)\right)$ of all systolic geodesics at an even number of points.
\end{restatable}

To prove Theorem \ref{thm:distribution}, we use the involution symmetry of the Bolza-like surface and analyze intersections via the quotient torus. A lift to the universal cover then shows that all intersections occur in pairs, and hence are even.

\section{Construction of Bolza-like surfaces}

In this section, we prove the main result (Theorem~\ref{thm:1.3}) of this article. Before going into the proof of Theorem~\ref{thm:1.3}, we develop two technical lemmas that are essential in proving the theorem.

\begin{lemma}\label{lemma1.2}
For $0\leq \epsilon<\frac{\pi}{4}$, there exists a unique hyperbolic quadrilateral with consecutive angles $\left(\frac{\pi}{4}-\epsilon\right),\left(\frac{\pi}{4}+\epsilon\right),\left(\frac{\pi}{4}-\epsilon\right) \text{ and }\left(\frac{\pi}{4}+\epsilon\right)$ and sides of equal length.
\end{lemma}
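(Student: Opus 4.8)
The plan is to exploit the order-two symmetry that the alternating angles force, reducing the quadrilateral to a pair of congruent isosceles triangles. Label the vertices $A,B,C,D$ in cyclic order, with the angle $\frac{\pi}{4}-\epsilon$ at $A$ and $C$ and the angle $\frac{\pi}{4}+\epsilon$ at $B$ and $D$, and let $\ell$ be the (not yet determined) common side length. Since $\frac{\pi}{4}-\epsilon$ and $\frac{\pi}{4}+\epsilon$ are both less than $\pi$, the quadrilateral is convex and the diagonal $AC$ lies inside it, cutting it into triangles $ABC$ and $ACD$. Each of these is isosceles with legs of length $\ell$ and apex angle $\frac{\pi}{4}+\epsilon$ (at $B$, respectively at $D$); an isosceles triangle is determined by its leg length and apex angle, so the two triangles are congruent and their base angles all coincide. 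Writing $\theta$ for this common base angle, the angle of the quadrilateral at $A$ is $\angle BAC+\angle DAC=2\theta=\frac{\pi}{4}-\epsilon$, hence $\theta=\frac{\pi}{8}-\frac{\epsilon}{2}$, and likewise at $C$. Thus every quadrilateral of the required type is two glued copies of the triangle with angles $\frac{\pi}{4}+\epsilon,\ \frac{\pi}{8}-\frac{\epsilon}{2},\ \frac{\pi}{8}-\frac{\epsilon}{2}$.

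Uniqueness is then immediate: a hyperbolic triangle is determined up to isometry by its three angles, so there is at most one such quadrilateral. For existence I would run this backwards. The numbers $\frac{\pi}{4}+\epsilon$ and $\frac{\pi}{8}-\frac{\epsilon}{2}$ (twice) are strictly positive precisely because $0\le\epsilon<\frac{\pi}{4}$, and they sum to $\frac{\pi}{2}<\pi$; hence there is a hyperbolic triangle $T$ with these angles, and $T$ is isosceles, the two sides opposite its equal angles having some common length $\ell$. Gluing two copies of $T$ along the remaining side yields a hyperbolic quadrilateral with all four sides of length $\ell$ and consecutive angles $\frac{\pi}{4}-\epsilon$ (a sum of two base angles), $\frac{\pi}{4}+\epsilon$, $\frac{\pi}{4}-\epsilon$, $\frac{\pi}{4}+\epsilon$; it is embedded and convex because each of its angles lies in $(0,\pi)$.

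To record the side length in a form useful for the rest of the paper, I would drop the perpendicular from $B$ to the midpoint $M$ of $AC$, obtaining a right-angled triangle with the right angle at $M$, acute angles $\frac{\pi}{8}-\frac{\epsilon}{2}$ at $A$ and $\frac{\pi}{8}+\frac{\epsilon}{2}$ at $B$, and hypotenuse $AB$ of length $\ell$. Specialising formula \eqref{2.2} to $\gamma=\frac{\pi}{2}$ gives
\[
\cosh\ell=\cot\!\left(\frac{\pi}{8}-\frac{\epsilon}{2}\right)\cot\!\left(\frac{\pi}{8}+\frac{\epsilon}{2}\right)=\frac{\cos\epsilon+\cos(\pi/4)}{\cos\epsilon-\cos(\pi/4)},
\]
where the last equality uses the product-to-sum identities. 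The only genuinely delicate point in the whole argument is checking that the right-hand side exceeds $1$, equivalently that $\cos\epsilon-\cos(\pi/4)>0$: this is exactly the hypothesis $\epsilon<\frac{\pi}{4}$ (geometrically, $\epsilon=\frac{\pi}{4}$ degenerates the base angles to $0$), and it guarantees that the displayed equation has a unique solution $\ell>0$. Everything else is routine bookkeeping with the trigonometric formulas of Section~\ref{fig:triangle}'s surrounding discussion.
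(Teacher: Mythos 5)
Your proof is correct and follows essentially the same route as the paper's: both decompose the quadrilateral along a diagonal into two congruent isosceles triangles with angles $\frac{\pi}{4}+\epsilon,\ \frac{\pi}{8}-\frac{\epsilon}{2},\ \frac{\pi}{8}-\frac{\epsilon}{2}$, getting existence from the angle-sum condition and uniqueness from the fact that a hyperbolic triangle is determined by its angles. Your explicit side-length formula $\cosh\ell=\cot\left(\frac{\pi}{8}-\frac{\epsilon}{2}\right)\cot\left(\frac{\pi}{8}+\frac{\epsilon}{2}\right)$ is a correct bonus consistent with equation \eqref{eqn: 3.1} later in the paper.
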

\begin{proof}
Consider a hyperbolic triangle with angles $(\pi/4+\epsilon), (\pi/8-\epsilon/2) \text{ and }(\pi/8-\epsilon/2)$. For the existence of such a triangle, see Theorem 7.16.2 of \cite{beardon2012geometry}. The uniqueness (up to isometry) of this triangle follows from the trigonometric identity given in Theorem 2.2.1$(ii)$ \cite{Buser}.

The quadrilateral is obtained by taking two disjoint isometric copies of such triangle and attaching them along the side opposite to angle $(\pi/4+\epsilon)$ (see Figure~\ref{quadrilateral}). 

\noindent\textbf{Uniqueness:} If such quadrilateral is cut along a diagonal, we get two isometric triangles. The uniqueness of the quadrilateral follows from the uniqueness of the triangles.
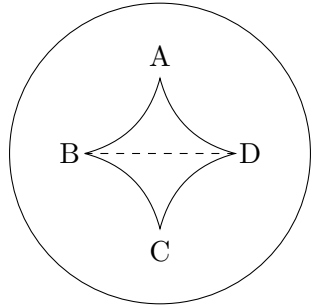
\begin{figure}[htbp]
\begin{center}
\begin{tikzpicture}[xscale=1,yscale=1]

\draw (0,0) circle (2);
\draw[dashed] (1,0)--(-1,0);
\draw (-1,0) [bend right] to (0,1) to (1,0) to (0,-1) to (-1,0); 
\draw (0,1.3) node{A} (-1.2,0) node{B} (0,-1.3) node{C} (1.2,0) node{D};

\end{tikzpicture}
\end{center}
 \caption{$\triangle ABD$ and $\triangle BDC$ are the isometric triangles with interior angles at each of the vertices $A$ and $C$ is $\frac{\pi}{4}-\epsilon$.}
\label{quadrilateral}
\end{figure}
\end{proof}

\noindent\textbf{Notation:} We call the quadrilateral in Lemma~\ref{lemma1.2} as \emph{square} and denote it by $Q_\epsilon$.

\begin{lemma}\label{lem:3.2}
    The common perpendiculars between the opposite sides of the square $Q_\epsilon$ pass through the point of intersection of the diagonals and they have the same length $\sqrt{2} \cos\epsilon +1$. 
\end{lemma}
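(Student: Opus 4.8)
The idea is to read both assertions off the symmetry group of $Q_\epsilon$ and to reduce the length computation to a single right triangle. I would first set up notation: write the vertices of $Q_\epsilon$ cyclically as $A,B,C,D$ with $\angle A=\angle C=\tfrac{\pi}{4}-\epsilon$ and $\angle B=\angle D=\tfrac{\pi}{4}+\epsilon$, so that $AC$ and $BD$ are the diagonals and, by Lemma~\ref{lemma1.2}, all four sides share one common length. By the construction in the proof of that lemma, $Q_\epsilon$ is the double, across its base, of the isosceles triangle $T$ with apex angle $\tfrac{\pi}{4}+\epsilon$ and base angles $\tfrac{\pi}{8}-\tfrac{\epsilon}{2}$; here the base of $T$ is the diagonal $AC$, while $B$ and $D$ are the apex of $T$ and the apex of its mirror copy. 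This immediately exhibits two reflections of $Q_\epsilon$: the doubling reflection $\sigma$, which fixes $A,C$ and interchanges $B,D$, and the reflection $\sigma'$ arising from the axial symmetry of the isosceles triangle $T$, which fixes $B,D$, interchanges $A,C$, and has the line $BD$ as its axis. I would then record the consequences: the diagonals meet perpendicularly in a point $O$, their common midpoint; $\rho:=\sigma\sigma'$ is the half-turn about $O$, so it interchanges opposite vertices, hence opposite sides, of $Q_\epsilon$; and each diagonal bisects the two angles of $Q_\epsilon$ at its endpoints.

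Next I would establish incidence with $O$. Fix the pair of opposite sides $AB$ and $CD$, let $F$ be the foot of the perpendicular dropped from $O$ onto the line carrying $AB$, and let $\ell_1$ be the complete geodesic through $O$ and $F$. Since $\ell_1$ passes through $O$ and $\rho$ is the half-turn about $O$, we have $\rho(\ell_1)=\ell_1$; and since $\rho$ takes the line $AB$ to the line $CD$, the geodesic $\ell_1$ is perpendicular to the line $CD$ as well, meeting it at $F':=\rho(F)$. Thus $\ell_1$ is a common perpendicular of the two sides passing through $O$; it is \emph{the} common perpendicular, because two lines have at most one common perpendicular; and $O$ is the midpoint of the segment $FF'$ since $F'=\rho(F)$, so this common perpendicular has length $2\,d(O,\text{line }AB)$. (The acute angles of the triangle $OAB$ are $\tfrac{\pi}{8}-\tfrac{\epsilon}{2}$ and $\tfrac{\pi}{8}+\tfrac{\epsilon}{2}$, both less than $\tfrac{\pi}{2}$, so $F$ lies in the interior of the side $AB$ and $F'$ in the interior of $CD$; hence $FF'$ is genuinely the perpendicular segment joining the two sides, and it lies in the convex polygon $Q_\epsilon$.) The same argument shows that the common perpendicular of the other pair $BC$, $DA$ passes through $O$ too.

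For the equality of lengths I would use the remaining reflection: $\sigma'$, being a reflection in the diagonal $BD$, sends $AB\mapsto CB$ and $CD\mapsto AD$, hence interchanges the two pairs of opposite sides $\{AB,CD\}$ and $\{BC,DA\}$; as it is an isometry, it carries the common perpendicular of the first pair isometrically onto that of the second, and so the two common perpendiculars have the same length. It then remains to compute this length. The triangle $OAB$ is right-angled at $O$ (perpendicular diagonals) with acute angles $\tfrac{\pi}{8}\mp\tfrac{\epsilon}{2}$ (the diagonals bisect the angles of $Q_\epsilon$), so \eqref{2.2} gives $\cosh|OA|=\cos(\tfrac{\pi}{8}+\tfrac{\epsilon}{2})/\sin(\tfrac{\pi}{8}-\tfrac{\epsilon}{2})$; dropping the perpendicular $OF$ inside the right triangle $OAF$ (with $\angle OAF=\tfrac{\pi}{8}-\tfrac{\epsilon}{2}$), \eqref{eqn 2.4} gives $\sinh|OF|=\sin(\tfrac{\pi}{8}-\tfrac{\epsilon}{2})\sinh|OA|$. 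A short simplification using $\cos^2x-\sin^2y=\cos(x+y)\cos(x-y)$ collapses this to $\sinh^2|OF|=\cos\epsilon/\sqrt2$, whence the common perpendicular, of length $\ell=2|OF|$, satisfies $\cosh\ell=1+2\sinh^2|OF|=1+\sqrt2\cos\epsilon$, the asserted value (which at $\epsilon=0$ recovers the quantity $1+\sqrt2$ attached to the Bolza surface).

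I expect the only real obstacle to be the symmetry bookkeeping of the first two paragraphs: correctly matching each reflection to the pair of vertices it fixes, observing that $\rho$ interchanges opposite sides whereas $\sigma'$ interchanges the two \emph{pairs} of opposite sides, and checking that the perpendicular feet land on the sides rather than on their prolongations, so that the segment the lemma calls ``the common perpendicular between the opposite sides'' is exactly the one produced by the half-turn argument. Once that is in place, the trigonometry is the two-line computation of \eqref{2.2} and \eqref{eqn 2.4} indicated above.
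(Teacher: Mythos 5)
Your proof is correct and follows essentially the same route as the paper's: both drop perpendiculars from the intersection point $O$ of the diagonals, use the symmetry exchanging opposite vertices to see that the two feet and $O$ are collinear (the paper phrases this as a congruence of the triangles $\triangle AMO$ and $\triangle CNO$, you as invariance of the line through $O$ under the half-turn $\rho$), and then compute the hyperbolic cosine of the length from a right triangle whose hypotenuse is half a diagonal. Your write-up is slightly more careful in that you check the feet land in the interiors of the sides and deduce the equality of the two perpendicular lengths from the reflection in a diagonal (where the paper says ``similarly''), but these are refinements of the same argument rather than a different one.
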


\begin{proof}
    \begin{figure}[htbp]
        \centering
        \begin{tikzpicture}
            \draw [bend left=20] (-1.5,-1.5) to (1.5,-1.5) to (1.5,1.5) to (-1.5,1.5) to (-1.5,-1.5);
            \draw (-1.5,-1.5)--(1.5,1.5);
            \draw (1.5,-1.5)--(-1.5,1.5);
            \draw (0,-.35) node {$O$};
            \draw [dashed, thick, bend left=10] (-1.2,0) to (0,0) to (1.22,0);
            \draw (-1.6,0) node{$M$} (-1.75,1.5) node {$A$} (-1.75,-1.5) node {$B$} (1.5,0) node{$N$} (1.75,1.5) node {$D$} (1.75,-1.5) node {$C$};
        \end{tikzpicture}
        \caption{}
        \label{fig:3.2}
    \end{figure}
    We label the vertices of $Q_\epsilon$ by $A,B,C,$ and $D$ and let the interior angles at these vertices be $\frac{\pi}{4}+\epsilon$, $\frac{\pi}{4}-\epsilon$, $\frac{\pi}{4}+\epsilon$, and $\frac{\pi}{4}-\epsilon$, respectively. Let the diagonals of the square meet at $O$. Suppose that the perpendiculars from $O$ into the sides $AB$ and $CD$ meet at $M$ and $N$, respectively (see Figure \ref{fig:3.2}). We show that $MON$ is a geodesic, that is $\angle MON = \pi$ and this will prove that the common perpendicular between $AB$ and $CD$ passes through $O$. In triangles $\triangle AMO$ and $\triangle CNO$, $\angle MAO=\frac{\pi}{8}+ \frac{\epsilon}{2}=\angle NCO$, $\angle OMA=\frac{\pi}{2}= \angle ONC$, $OA=\frac{1}{2} AC= OC$. From the trigonometric identities for right angled triangle (see Theorem 2.2.2 \cite{Buser}), the triangles $\triangle AMO$ and $\triangle CNO$ are isometric and hence $\angle{AOM} =\angle{CON}$. This implies that $\angle MON= \pi$ because $AC$ is a geodesic. Now, we compute the length of the perpendicular $MN$. Using trigonometric identity $(iii)$ in Theorem 2.2.2 \cite{Buser} for the right-angled triangle $\triangle AMO$, we have
    \begin{align*}
        \sinh{OM}=\sinh{OA} \sin{\left(\frac{\pi}{8}+\frac{\epsilon}{2}\right)}.
    \end{align*}
    Therefore,
    \begin{align*}
        \cosh{MN}&=2\sinh^2{OM}+1\\
        &=2 \sinh^2{OA} \sin^2{\left(\frac{\pi}{8}+\frac{\epsilon}{2}\right)}+1\\
        &=(\cosh{AC}-1) \sin^2{\left(\frac{\pi}{8}+\frac{\epsilon}{2}\right)} +1\\
        &=\sqrt{2} \cos{\epsilon} +1.
    \end{align*}
    Similarly, one can check that the common perpendicular between the sides $AD$ and $BC$ passes through $O$ and its length is also given by $\sqrt{2} \cos{\epsilon} +1$.
\end{proof}
\begin{note}
    The distance between the opposite sides of the square $Q_\epsilon$ decreases with $\epsilon$.
\end{note}


\mainthm*


\begin{proof}
    First, we construct a hyperbolic torus with $2g-2$ cone points with angles $\pi$ at each cone point as follows. For any two numbers $m,n\in \mathbb{N}$, take $mn$ identical copies of the squares $Q_\epsilon$ and arrange them in an $m\times n$ rectangular grid. We further identify the sides of the rectangles in a natural way so that we obtain a torus $T_\epsilon$ with $mn$ singular points and the angle at each singular point is $\pi$. If $mn$ is even, then there is a hyperbolic surface of the genus $g=\frac{mn+2}{2}$, say, $S_g(\epsilon)$ and an orientation preserving involution $\sigma_\epsilon$ of $S_g(\epsilon)$ with $2g-2$ fixed points such that the quotient of the surface by the involution is the torus $T_\epsilon$, that is $S_g(\epsilon)/\sigma_\epsilon\cong T_\epsilon$ (see Figure \ref{fig:involution}). We define the path $\mathcal{D}_g: \left[0,\frac{\pi}{4}\right) \to \mathcal{T}_g$ by
    $$\mathcal{D}_g(\epsilon)=S_g(\epsilon), \text{ for all } \epsilon \in [0,\pi/4).$$

    \begin{figure}[htbp]
        \centering
        \begin{tikzpicture}
            \draw (-1.2,2)..controls(-.5,6) and (.5,6)..(1.2,2);
            \draw (-1.2,2)..controls(-1.3,1.5) and (-1.7,1)..(-2,1);
            \draw (1.2,2)..controls(1.3,1.5) and (1.7,1)..(2,1);

            \draw (-1.2,-2)..controls(-.5,-6) and (.5,-6)..(1.2,-2);
            \draw (-1.2,-2)..controls(-1.3,-1.5) and (-1.7,-1)..(-2,-1);
            \draw (1.2,-2)..controls(1.3,-1.5) and (1.7,-1)..(2,-1);

            \draw (-2,1)..controls(-4,.7) and (-4,-.7)..(-2,-1);
            \draw (2,1)..controls(4,.7) and (4,-.7)..(2,-1);

            \draw (0.05,.95) to [bend left=60] (0.05,2.05);
            \draw (-.08,1.1) to [bend right=60] (-.08,1.9);

            \draw (0.05,2.95) to [bend left=60] (0.05,4.05);
            \draw (-.08,3.1) to [bend right=60] (-.08,3.9);

            \draw (0.05,-.95) to [bend right=60] (0.05,-2.05);
            \draw (-.08,-1.1) to [bend left=60] (-.08,-1.9);

            \draw (0.05,-2.95) to [bend right=60] (0.05,-4.05);
            \draw (-.08,-3.1) to [bend left=60] (-.08,-3.9);

            \foreach \y in {0,0.5,-.5}
            {
                \fill (-0.1,\y) circle (1.5pt);
            }

            \draw [dotted] (0,6)--(0,-6);

            
            \draw (1.45,0.05) to [bend right=60] (2.55,0.05);
            \draw (1.6,-.08) to [bend left=60] (2.4,-.08);

            \draw (-1.45,0.05) to [bend left=60] (-2.55,0.05);
            \draw (-1.6,-.08) to [bend right=60] (-2.4,-.08);


            \draw [->-=1] (-.3,5.5) arc
            [
                start angle=100,
                end angle=420,
                x radius=1cm,
                y radius =.12cm
            ] ;
        
            \draw [->-=1] (4,0)--(5,0);
            \draw (1.1,5.2) node {$\sigma_\epsilon$};

            \draw (-1.2+9,2)..controls(-1.3+9,1.5) and (-1.7+9,1)..(-2+9,1);
            \draw (-1.2+9,-2)..controls(-1.3+9,-1.5) and (-1.7+9,-1)..(-2+9,-1);
            \draw (-2+9,1)..controls(-4+9,.7) and (-4+9,-.7)..(-2+9,-1);

            \draw (-1.45+9,0.05) to [bend left=60] (-2.55+9,0.05);
            \draw (-1.6+9,-.08) to [bend right=60] (-2.4+9,-.08);
            \draw (9,5)..controls (8.8,5) and (8.5,4.2)..(-1.2+9,2);
            \draw (9,-5)..controls (8.8,-5) and (8.5,-4.2)..(-1.2+9,-2);

            \draw (0.05+9,.95) to [bend left=60] (0.05+9,2.05);
            \draw (0.05+9,2.95) to [bend left=60] (0.05+9,4.05);
            \draw (0.05+9,-.95) to [bend right=60] (0.05+9,-2.05);
            \draw (0.05+9,-2.95) to [bend right=60] (0.05+9,-4.05);

            \draw (9,5) to [bend right=10] (0.05+9,4.05);
            \draw (0.05+9,2.95) to [bend right=10] (0.05+9,2.05);
            \draw [very thick, dotted] (0.05+9,.95) to [bend right=10] (0.05+9,-.95);
            \draw (9,-5) to [bend left=10] (0.05+9,-4.05);
            \draw (0.05+9,-2.95) to [bend left=10] (0.05+9,-2.05);
        \end{tikzpicture}
        \caption{}
        \label{fig:involution}
    \end{figure}

    Now, we investigate all the systolic geodesic of the surface $S_g(\epsilon)$. As the involution $\sigma_\epsilon$ acts as a rotation by the angle $\pi$, any geodesic segment in $T_\epsilon$ joining two distinct singular points (not passing through any other singular points) lift to a geodesic in $S_g(\epsilon)$. In particular, the sides of squares and the diagonals of the square joining the vertices with interior angle $\left (\frac{\pi}{4}+\epsilon\right)$ lift to geodesic in $S_g(\epsilon)$. We denote the length of these segments by $l_\epsilon$ and $x_\epsilon$, respectively (see Figure \ref{FIG:0.1}). We show that $\sys(S_g(\epsilon)) =\min\{2l_\epsilon,2x_\epsilon\}$ and the systolic geodesics are either the lifts of the sides of $Q_\epsilon$ or the lifts the of diagonals or both (depending on $\epsilon$). Using hyperbolic trigonometric identity $(ii)$ in Theorem 2.2.1 \cite{Buser}, we have
    \begin{align}
        \cosh\left(l_\epsilon\right)&=\frac{\cos\left(\frac{\pi}{8}-\frac{\epsilon}{2}\right)+\cos\left(\frac{\pi}{8}-\frac{\epsilon}{2}\right)\cos\left(\frac{\pi}{4}+\epsilon\right)}{\sin\left(\frac{\pi}{8}-\frac{\epsilon}{2}\right)\sin\left(\frac{\pi}{4}+\epsilon\right)} \label{eqn: 3.1}, \text{ and }\\
        \cosh (x_\epsilon)&= \frac{\cos\left( \frac{\pi}{4}-\epsilon\right)+ \cos^2\left( \frac{\pi}{8}+\frac{\epsilon}{2}\right)}{\sin^2\left( \frac{\pi}{8}+\frac{\epsilon}{2}\right)}\label{eqn:3.2}.
    \end{align}

    Initially, when $\epsilon=0$, $l_0\approx 2.448 <3.057 \approx x_0$. Differentiating equations \eqref{eqn: 3.1} and \eqref{eqn:3.2} with respect to $\epsilon$, we have
    \begin{align*}
        \frac{d}{d\epsilon}\cosh\left(l_\epsilon\right)&=\frac{1}{4}\cdot \frac{\sin \left(\frac{\pi}{4}+\epsilon\right)-\sin \left(\frac{\pi}{4}- \epsilon\right)}{\sin^2 \left(\frac{\pi}{8}-\frac{\epsilon}{2}\right )\sin^2 \left(\frac{\pi}{4}+\epsilon\right )}>0,  \ \text{ for } 0\leq \epsilon < \frac{\pi}{4}, \text{ and }\\
        \frac{d}{d\epsilon}\cosh({x_\epsilon})&= -\frac{\sqrt{2} \cdot\cos{\left( \frac{\pi}{8}-\frac{\epsilon}{2}\right)}}{\sin^3{\left( \frac{\pi}{8}+\frac{\epsilon}{2}\right)}}<0 \text{ for } 0\leq \epsilon<\frac{\pi}{4}.
    \end{align*}
    Thus, $l_\epsilon$ is strictly increasing and $x_\epsilon$ strictly decreasing in $0\leq \epsilon<\frac{\pi}{4}$ with $x_\epsilon\to 0$ as $\epsilon \to \frac{\pi}{4}$. Therefore, there exists some $\epsilon_0\in (0, \pi/4)$ such that $l_{\epsilon_0}=x_{\epsilon_0}$. But in this case 
    $$\frac{\pi}{8}+\frac{\epsilon_0}{2}=\frac{\pi}{4}-\epsilon_0 \implies \epsilon_0=\frac{\pi}{12}.$$

    Now, we show that for any simple closed geodesic $\gamma$ in $S_g(\epsilon)$, $l(\gamma) \geq \min \{2l_\epsilon,2x_\epsilon\}$, for $0\leq \epsilon \leq \frac{\pi}{12}$. There are following two cases to consider.

    \textbf{Case 1:} $\gamma$ passes through a fixed point of the involution $\sigma_\epsilon$. Then $\gamma$ must pass through another fixed point that is diametrically opposite to the first one. In the torus $T_\epsilon$, the shortest arc joining two singular points has lengths either $l_\epsilon$ or $x_\epsilon$ (depending on $\epsilon$). Thus, $l(\gamma)\geq \min \{2l_\epsilon,2x_\epsilon\}$.

    \textbf{Case 2:} $\gamma$ does not pass through any fixed points. Then $\gamma$ project onto a closed curve (possibly non-simple) in $T_\epsilon$. In this case, we choose $m,n$ large enough such that $l(\gamma)\geq \min \{2l_\epsilon,2x_\epsilon\}$. 
    
    Now, we find lower bounds of $m,n$, and consequently of the genus $g$. For $0 \le \epsilon < \frac{\pi}{12}$, we have $l_{\epsilon} < x_{\epsilon}$, and $l_{\epsilon}$ attains its maximum value at $\epsilon = \frac{\pi}{12}$, where $l_{\pi/12} = x_{\pi/12}\approx 2.55$. By Lemma \ref{lem:3.2}, the shortest distance $y_\epsilon$ between the opposite sides is decreasing and $y_{\pi/12} \approx 1.506$. As $4$ is the smallest integer such that $4\times y_{\pi/12}> 2\times l_{\pi/12}$, we have $m,n \geq 4$. Therefore, the genus $g=\frac{mn+2}{2} \geq 9$. As $mn$ is also even, we take $m=2k, k\geq 2$ and hence $g=kn+1$, where $k,n\in \N, k\geq 2, n\geq 4$.

    Now we show that $S_g(\epsilon)\in \chi_g$ for $0\leq \epsilon \leq \pi/12$. As $l_\epsilon< x_\epsilon$ for $0\leq \epsilon < \pi/12$, the systolic geodesics decompose $S_g(\epsilon)$ into the squares $Q_\epsilon$. For $\epsilon= \pi/12$, the diagonals further decompose it into triangles with each interior angle $\pi/6$. This completes the proof.

    \begin{figure}[htbp]
    \begin{center}
    \begin{tikzpicture}
    
     \draw[step=1.5 cm] (0,0) grid (7.5,7.5) ;
     \draw[thin]  (1.5,6) circle (.4 cm);
     \draw (.8,6.4) node{$\frac{\pi}{4}-\epsilon$} (2.25,6.5) node{$\frac{\pi}{4}+\epsilon$} (2.25,5.5) node{$\frac{\pi}{4}-\epsilon$} (.8,5.5) node{$\frac{\pi}{4}+\epsilon$};
     \draw (3,4.5)--(4.5,3);
     \draw [red] (3,4.1) arc
    [
        start angle=270,
        end angle=315,
        x radius=.4cm,
        y radius =.4cm
    ] ;
    \draw [red] (4.1,3) arc
    [
        start angle=180,
        end angle=135,
        x radius=.4cm,
        y radius =.4cm
    ] ;
    \draw [red] (3.4,3) arc
    [
        start angle=0,
        end angle=90,
        x radius=.4cm,
        y radius =.4cm
    ] ;
     \draw (2.5,4) node {\tiny$\frac{\pi}{8}+\frac{\epsilon}{2}$} (2.5,3.3) node {\tiny$\frac{\pi}{4}-\epsilon$} (4.1,2.7) node {\tiny$\frac{\pi}{8}+\frac{\epsilon}{2}$};
     \draw[thick, cyan] (3.9,3.9) node{$x_\epsilon$};
     \draw[thick, cyan] (4.7,3.9) node{$l_\epsilon$};
     \end{tikzpicture}
    \end{center}
    \caption{The torus $T_\epsilon$}
    \label{FIG:0.1}
    \end{figure}
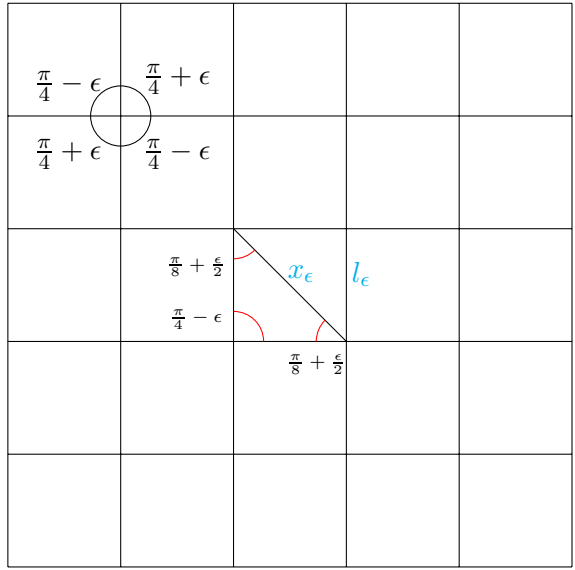
\end{proof}

\begin{note}
    The set of systoles on the surface $S_g\left(\frac{\pi}{12}\right)$ forms a distance Delaunay triangulation with $2g-2$ vertices (for a detailed discussion on distance Delaunay triangulation, see Section 2, \cite{MinimalDelaunay}).
\end{note}

\begin{note}
    Though the systolic geodesics in $S_g\!\left(\frac{\pi}{12}\right)$ decompose the surface into triangles, similarly to the Bolza surface, the surface is still not maximal. Indeed, a maximal surface of genus $g$ must contain at least $6g - 5$ systolic geodesics (see Theorem 2.8 in \cite{Schmutz}), whereas these Bolza-like surfaces have only $6g - 6$ systoles. Recall that a maximal surface is one for which the systole function attains a (local or global) maximum.
\end{note}

\section{An application of Bolza-like surfaces}\label{Application}
In this section, we show that Bolza-like surfaces provide a direct application of the construction developed in Section 2, enabling us to recover a result of Schmutz by explicitly constructing globally maximal surfaces for the systole function.

\textbf{A construction of global maximal surfaces:} Let $X$ be a closed orientable hyperbolic surface which admits a tessellation by triangles of equal side lengths and the number $N$ of triangles around each vertex is the same. For $a>0$, let $H_a$ denote a right-angled hyperbolic hexagon with lengths of each alternating side $a$. Then the surface $X(a)$ which has the same tessellation structure as $X$ in which the triangles are replaced by the hexagons $H_a$ and the triangle vertices by boundary geodesics of equal length $Na$ (see Figure \ref{fig:localPic}). Then $X(0)$ is the surface where the boundaries become cusps. In Section 3.1 \cite{SchmutzGlobalMaximal}, P. Schmutz proved that the surface $X(0)$ is a global maximal surface with a trivial automorphism group.

\subsection*{Global maximal surfaces from Bolza-like surfaces}
In the Bolza-like surface $\mathcal{D}_g\left( \frac{\pi}{12}\right)$, as obtained in Theorem \ref{thm:1.3}, the systolic geodesics gives a triangle decomposition with equal side lengths of the triangle and the number of vertices of the triangles is $2g-2$. Therefore, we will be able to find the global maximal surface in the Teichm\"uller space of genus $g$ and with $2g-2$ punctures, where $g=kn+1$, $k,n\in \N, k\geq 2, n\geq 4$. 

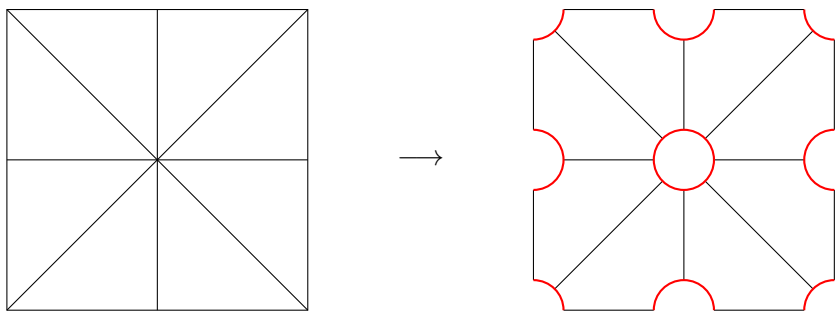
\begin{figure}[htbp]
    \centering
    \begin{tikzpicture}
        \draw (-2,-2)--(2,-2)--(2,2)--(-2,2)--cycle;
        \draw (-2,0)--(2,0) ;
        \draw (0,2)--(0,-2) ;
        \draw (-2,-2)--(2,2) ;
        \draw (2,-2)--(-2,2) ;

        \draw (3+2,1.6)--(3+2,.4);
        \draw (3+2,-1.6)--(3+2,-.4);
        \draw (7+2,1.6)--(7+2,.4);
        \draw (7+2,-1.6)--(7+2,-.4);
        \draw (3.4+2,2)--(4.6+2,2);
        \draw (5.4+2,2)--(6.6+2,2);
        \draw (3.4+2,-2)--(4.6+2,-2);
        \draw (5.4+2,-2)--(6.6+2,-2);
        \draw (3.4+2,0)--(4.6+2,0);
        \draw (5.4+2,0)--(6.6+2,0);
        \draw (5+2,.4)--(5+2,1.6);
        \draw (5+2,-.4)--(5+2,-1.6);
        \draw ({3+.4*cos(315)+2},{2+.4*sin(315)})--({5+.4*cos(135)+2},{.4*sin(135)});
        \draw ({5+.4*cos(315)+2},{.4*sin(315)})--({7+.4*cos(135)+2},{-2+.4*sin(135)});
        \draw ({7+.4*cos(225)+2},{2+.4*sin(225)})--({5+.4*cos(45)+2},{.4*sin(45)});
        \draw ({5+.4*cos(225)+2},{.4*sin(225)})--({3+.4*cos(45)+2},{-2+.4*sin(45)});

        \draw [thick, red] (5+2,0) circle(.4 cm);
        \draw[thick, red] (3+2,1.6) arc[start angle=270, end angle=360, radius=.4];
        \draw[thick, red] (6.6+2,2) arc[start angle=180, end angle=270, radius=.4];
        \draw[thick, red] (3+2,-1.6) arc[start angle=90, end angle=0, radius=.4];
        \draw[thick, red] (6.6+2,-2) arc[start angle=180, end angle=90, radius=.4];
        \draw[thick, red] (3+2,-.4) arc[start angle=270, end angle=450, radius=.4];
        \draw[thick, red] (7+2,.4) arc[start angle=90, end angle=270, radius=.4];
        \draw[thick, red] (4.6+2,2) arc[start angle=180, end angle=360, radius=.4];
        \draw[thick, red] (4.6+2,-2) arc[start angle=180, end angle=0, radius=.4];
        
        \draw (3.5,0) node {$\longrightarrow$};
    \end{tikzpicture}
    \caption{Local Picture around a vertex}
    \label{fig:localPic}
\end{figure}

\section{Distribution of the systolic geodesics on the Bolza-like surfaces}
In this section, we show that the distribution of the systolic geodesics on the Bolza-like surfaces is symmetric in the theorem below.
\mainthmm*

\begin{proof}
    Let $\gamma$ be a simple closed non-systolic geodesic in $\mathcal{D}_g\left(\frac{\pi}{12} \right)$. Then the following two cases may arise.

    \noindent \textbf{Case 1:} Let $\gamma$ pass through some fixed points of the involution $\sigma_\epsilon$ (as discussed in Theorem \ref{thm:1.3}). Then $\gamma$ is preserved by the involution $\sigma_\epsilon$ and it passes through an even number of fixed points of $\sigma_\epsilon
    $. Hence, if $\gamma$ intersects a systolic geodesic at a point, another point must exist at which $\gamma$ intersects the same geodesic. This point is the image of the first point of intersection under the involution. Thus, the theorem holds in this case.

    \noindent \textbf{Case 2:} Let $\gamma$ do not pass through the fixed points of the involution $\sigma$. Then $\gamma$ descends to a closed geodesic $\overline{\gamma}$ in the torus $T_{\frac{\pi}{12}}$ which is not necessarily simple. As the parity of the geometric intersection number depends only on the topology of the surface, for convenience, we may consider $T=T_{\frac{\pi}{12}}$ as a topological surface. Let $T$ be obtained by identifying the opposite sides of the unit Euclidean square (see Figure \ref{fig torus}). Then the following two subcases may arise. 
    \begin{figure}[H]
        \centering
        \begin{tikzpicture}
            \draw[step=1cm,gray,very thin] (0,0) grid (5,4);
            \foreach \x in {0,1,...,4} 
            {
            \draw[gray, very thin] (0,\x) -- (\x,0);
            \draw[gray, very thin] (\x+1, 4) -- (5, \x);
            }
        \end{tikzpicture}
        \caption{Flat torus taking $m=4,n=5$}
        \label{fig torus}
    \end{figure}
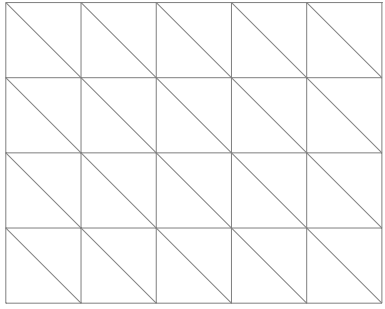

    \noindent \textbf{Subcase 1:} $\overline{\gamma}$ is simple. Then $\overline{\gamma}$ is homotopic to some $(p,q)$ curve $\delta$ in the torus $T$ where $p,q$ are integers with $\gcd(p,q)=1$. Note that the number of points of intersection of $\overline{\gamma}$ with the systolic geodesics is the same as the number of points of intersection of $\delta$ and the curves triangulating the torus. Now, consider everything in the universal cover $\R^2$ (see Figure \ref{fig univer1}). Let $\Tilde{\delta}$ be the lift of $\delta$ in the universal cover. Then the number of points of intersection of $\Tilde{\delta}$ with the lifts of the triangulating curves is the same as the number of points of intersection of the red arc in Figure \ref{fig univer1} with the lifts of the triangulating curves. Now, we count the number of points of intersection of the red arc. First, suppose that $pq>0$. The number of points of intersection of the horizontal part of the red arc with the triangulating curves is $2pn+1$ and for the vertical part is $2qm-1$ (1 is subtracted because one intersection point is already counted). Hence, the total number is $2(pn+qm)$, which is even.

    Now, suppose that $pq < 0$. Without loss of generality, assume that $p<0$. In this case, the number of points of intersection of the horizontal part is $2|p|n-1$, and for the vertical part is $2qn-1$ (see Figure \ref{fig univer2}). Hence the total is $2(|p|n+qn-1)$, which is even.

    \begin{figure}[htbp]
        \centering
        \begin{tikzpicture}
            \fill[gray!20!white]  (0,0) rectangle (5,4);
            \fill[gray!20!white]  (6,5) rectangle (11,7);
            \draw[step=1cm,gray,very thin] (-.9,-.9) grid (11.9,6.9);
            \foreach \x in {0,1,...,6} 
            {
            \draw[gray, very thin] (-1,\x+1) -- (\x+1,-1);
            }
            \foreach \x in {0,1,...,3} 
            {
            \draw[gray, very thin] (\x,7) -- (\x+8,-1);
            }
            \foreach \x in {0,1,...,6} 
            {
            \draw[gray, very thin] (\x+4,7) -- (12,\x-1);
            }
            \draw [red] (0,.3)--(5.4,.3)--(5.4,5.4)--(6,5.4);
            \draw [cyan] (0,0) rectangle (5,4);
            \draw [cyan] (6,7)--(6,5)--(11,5)--(11,7);

            \draw (0,.3) node {\tiny $\bullet$} (6,5.4) node {\tiny $\bullet$} (6,5) node {\tiny $\bullet$} (0,0) node {\tiny $\bullet$};
            \fill[white!50!white] (5.1+1,4.5) rectangle (5.9+1,4.9); 
            \draw (-.5,-.3) node {$(0,0)$} (5.5+1,4.75) node {$(p,q)$};
        \end{tikzpicture}
        \caption{Tessellation in the universal cover; the shaded rectangle represents a fundamental domain}
        \label{fig univer1}
    \end{figure}

    \begin{figure}[htbp]
        \centering
        \begin{tikzpicture}
            
            \fill[gray!20!white]  (6,0) rectangle (11,4);
            \fill[gray!20!white]  (0,5) rectangle (5,7);
            \draw[step=1cm,gray,very thin] (-.9,-.9) grid (11.9,6.9);
            \foreach \x in {0,1,...,6} 
            {
            \draw[gray, very thin] (-1,\x+1) -- (\x+1,-1);
            }
            \foreach \x in {0,1,...,3}
            {
            \draw[gray, very thin] (\x,7) -- (\x+8,-1);
            }
            \foreach \x in {0,1,...,6} 
            {
            \draw [gray, very thin] (\x+4,7) -- (12,\x-1);
            }
            \draw [red] (6,.75)--(0.4,.75)--(0.4,5.5)--(0,5.5);
            \draw [cyan] (6,0) rectangle (11,4);
            \draw [cyan] (0,7)--(0,5)--(5,5)--(5,7);

            \draw (6,.75) node {\tiny $\bullet$} (0,5.5) node {\tiny $\bullet$} (0,5) node {\tiny $\bullet$} (6,0) node {\tiny $\bullet$};
            \fill[white!50!white] (-1.1+1,4.5) rectangle (-1.9+1,4.9);
            \fill[white!50!white] (-1.1+1+6,4.5-5) rectangle (-1.9+1+6,4.9-5);
            \draw (-.5+6,-.3) node {$(0,0)$} (5.5+1-7,4.75) node {$(p,q)$};

        \end{tikzpicture}
        \caption{Tessellation in the universal cover; the shaded rectangle represents a fundamental domain}
        \label{fig univer2}
    \end{figure}
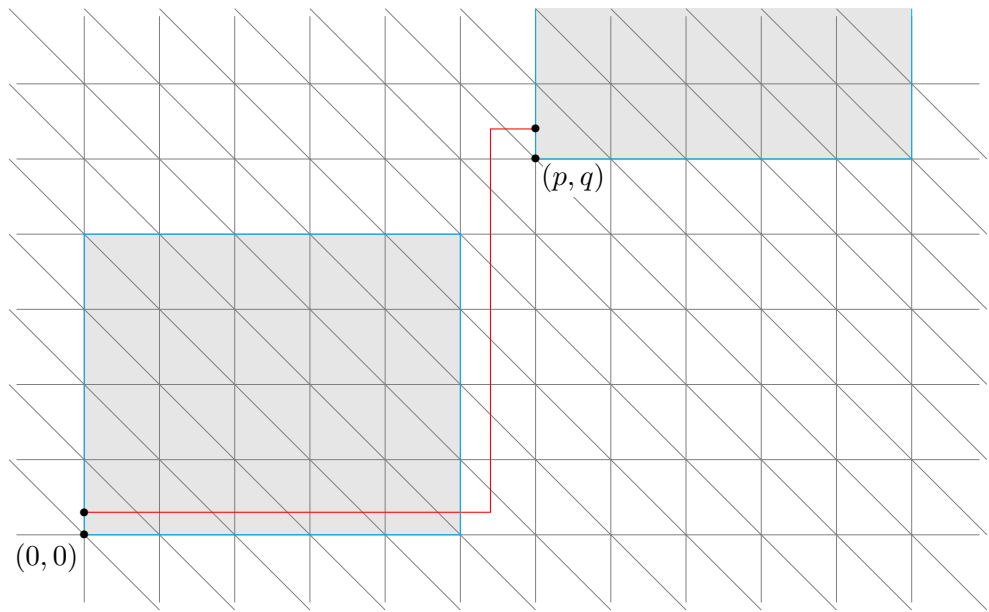

\noindent\textbf{Subcase 2:} $\overline{\gamma}$ is not simple. Take a lift of $\overline{\gamma}$ on the universal cover $\R^2$ (see Figure \ref{fig univer3}). The red loop (or loops in general) intersects each triangulating curve at an even number (0 or 2) of points. The remaining portion produces a simple closed curve on the torus and by Subcase 1, this part also intersects the triangulating curve at an even number of points. This completes the proof.

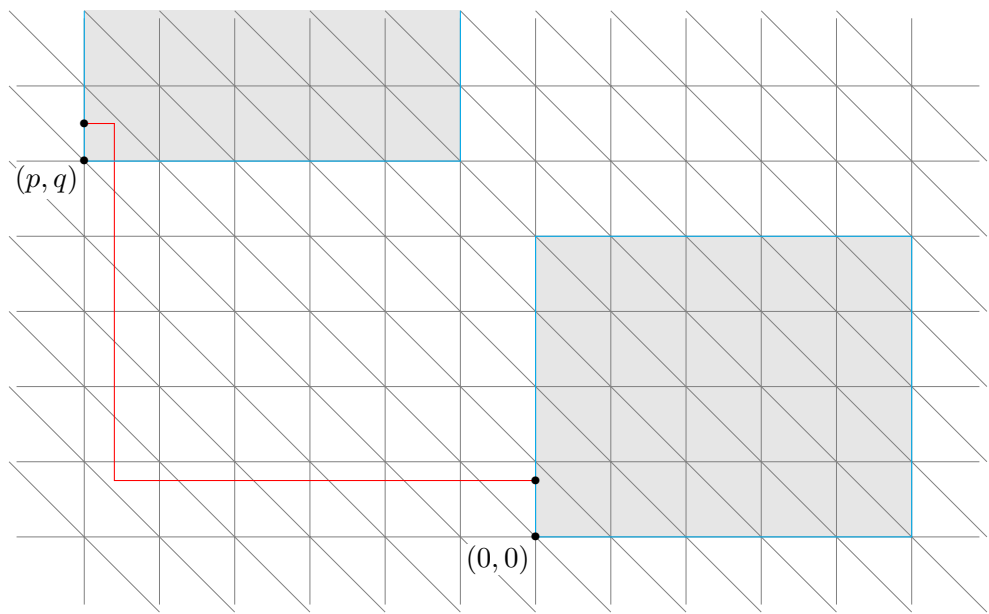
\begin{figure}[htbp]
        \centering
        \begin{tikzpicture}
            \fill[gray!20!white]  (0,0) rectangle (5,4);
            \fill[gray!20!white]  (6,5) rectangle (11,7);
            \draw[step=1cm,gray,very thin] (-.9,-.9) grid (11.9,6.9);
            \foreach \x in {0,1,...,6} 
            {
            \draw[gray, very thin] (-1,\x+1) -- (\x+1,-1);
            }
            \foreach \x in {0,1,...,3} 
            {
            \draw[gray, very thin] (\x,7) -- (\x+8,-1);
            }
            \foreach \x in {0,1,...,6} 
            {
            \draw[gray, very thin] (\x+4,7) -- (12,\x-1);
            }
           \draw (0,.3) node {\tiny $\bullet$} (6,5.4) node {\tiny $\bullet$};
           \draw [red] (2.2,1.5)..controls(4.5,5) and (-1,3)..(2.2,1.5);
           \draw [cyan] (0,.3)..controls(1,.2) and (2,1.2)..(2.2,1.5);
           \draw [cyan] (2.2,1.5)..controls(4,1) and (5.5,5)..(6,5.4);
           \fill[white!50!white] (5.1+1,4.5) rectangle (5.9+1,4.9);
           \draw (-.5,-.3) node {$(0,0)$} (5.5+1,4.75) node {$(p,q)$};
        \end{tikzpicture}
        \caption{Tessellation in the universal cover; the shaded rectangle represents a fundamental domain}
        \label{fig univer3}
    \end{figure}
    
\end{proof}

\section*{Acknowledgements}

The authors would like to thank the anonymous referee for useful suggestions and comments in improving the exposition.

\bibliographystyle{alpha}
\bibliography{bibliography.bib}

\end{document}